\newcommand{\ben}{\begin{enumerate}}
\newcommand{\een}{\end{enumerate}}
\newcommand{\eq}[2][label]{\begin{equation}\label{#1}#2\end{equation}}
\newcommand{\av}[2]{\langle #1\rangle_{_{\scriptstyle #2}}}
\newcommand{\ve}{\varepsilon}
\newcommand{\eps}{\varepsilon}
\renewcommand{\phi}{\varphi}
\newcommand{\BMO}{{\rm BMO}}
\newtheorem{theorem}{Theorem}[section]
\newtheorem{lemma}[theorem]{Lemma}
\newtheorem{corollary}[theorem]{Corollary}
\newtheorem*{theorem*}{Theorem}{\bf}{\it}
\newtheorem*{proposition*}{Proposition}{\bf}{\it}
\newtheorem*{observation*}{Observation}{\bf}{\it}
\newtheorem*{lemma*}{Lemma}{\bf}{\it}
\newtheorem*{conjecture*}{Conjecture}{\bf}{\it}
\theoremstyle{definition}
\theoremstyle{remark}
\newtheorem{remark}[theorem]{Remark}
\numberwithin{equation}{section}
\renewcommand{\phi}{\varphi}
\newcommand{\qq}{Q}
\newcommand{\vv}{V}
\newcommand{\ww}{W}
\newcommand{\cc}{\mathcal{C}}
\newcommand{\tr}{Tr}
\newcommand{\cbmo}[1]{\vvvert #1\vvvert}
\newcommand{\catwo}[1]{\llbracket#1\rrbracket_{A_2}}
\title{Monotone rearrangement in averaging classes}
\begin{document}
\author{Marat Abdrakhmanov}
\author{Leonid Slavin}
\author{Pavel Zatitskii}
\address{St. Petersburg State University, St. Petersburg, 199178, Russia}

\email{st067897@student.spbu.ru}

\address{University of Cincinnati, P.O. Box 210025, OH 45221-0025, USA}

\email{leonid.slavin@uc.edu}

\address{University of Cincinnati, P.O. Box 210025, OH 45221-0025, USA}

\email{zatitspl@ucmail.uc.edu}

\thanks{The second author is supported by the Simons Foundation, collaboration grants 317925 and 711643}
\thanks{The second and third authors express thanks to the Taft Research Foundation at the University of Cincinnati}

\subjclass[2020]{Primary 42A05, 42B35, secondary 46E30}

\keywords{BMO, Muckenhoupt weights, averaging class, monotone rearrangement}

\begin{abstract}
We consider a general collection of function classes on the interval $[0,1]$ defined in terms of certain averages and show that monotone rearrangement does not increase the class constant in each case. The formulation includes $\BMO$ and $A_2$ with a special choice of the norm and, respectively, characteristic.
\end{abstract}

\maketitle

\section{Introduction}
Throughout this note, we write $I=[0,1];$ $J$ and $L$ will denote subintervals of $I;$ $\av{\,\cdot\,}J$ will denote the Lebesgue average over $J,$ $\av{\varphi}J=\frac1{|J|}\,\int_J\varphi.$ 

Let $\qq\colon \mathbb{R} \to [0,+\infty)$ be a convex, even function; note that $\qq$ is thus increasing on $[0,+\infty)$.

Let $\phi$ be a measurable real-valued function on $I$. For a subinterval $J\subset I$ and $c\in\mathbb{R},$ let  
\eq[0.1]{
\vv_c(\phi, J) = \av{\qq(\phi-c)}J,
}
\eq[0.2]{
\vv(\phi,J) = \inf \{\vv_c(\phi,J)\colon c \in \mathbb{R}\},
}
and
\eq[0.3]{
\ww(\phi,J) = \sup\{\vv(\phi,J)\colon L \subset J\}.
}
Our main result concerns the class of all $\varphi$ on $I$ such that $W(\varphi,I)<\infty;$ we will call such classes {\it averaging}. In general, they are different for different $\qq.$ As an illustration, we consider two specific choices of $\qq.$ 
If $\qq(t) = |t|^p$ for $p\in[1,\infty),$ then $\ww(\phi,I)<\infty$ if and only if $\phi\in\BMO(I).$ In this case,
$\ww(\phi,I)$ is the $p$-th power of an equivalent BMO norm of $\phi,$ which we denote by $\cbmo{\cdot}_p$:
\eq[0.9]{
\cbmo{\varphi}_p=\sup_{J\subset I}\inf_c\,\av{\big|\varphi-c\big|^p}J^{1/p}.
}
This is a variant of the usual $\BMO^p$ norm where, having fixed an interval $J,$ one sets $c=\av{\varphi}J$ instead of taking infimum in $c$:
\eq[1]{
\|\varphi\|_{p}=\sup_{J\subset I}\,\av{\big|\varphi-\av{\varphi}J\big|^p}J^{1/p}.
}
Note that $\cbmo{\cdot}_p$ and $\|\cdot\|_{p}$ coincide for $p=2;$ for other values of $p$ we have elementary equivalence:
$$
\cbmo{\varphi}_p\le \|\varphi\|_p\le 2\cbmo{\varphi}_p.
$$

If $\qq(t)=e^{|t|},$ then $\ww(\varphi,I)<\infty$ if and only if $e^\varphi\in A_2.$ We denote the corresponding $A_2$-characteristic by $\catwo{w}$:
$$
\catwo{w}=\sup_{J\subset I}\inf_c\,\av{e^{|\log w-c|}}J.
$$
Recall that the usual $A_2$-characteristic is given by $[w]_{A_2}=\sup_{J\subset I}\av{w}J\av{w^{-1}}J.$ To see the relationship between the two characteristics, note that on one hand,
$$
\av{w}J\av{w^{-1}}J=\av{e^{\log w-c}}J\av{e^{-(\log w-c)}}J\le 
\Big(\av{e^{|\log w-c|}}J\Big)^2.
$$
On the other hand, since for every $J\subset I$ we have $\av{e^{\pm(\log w-\av{\log w}J)}}J\le [w]_{A_2},$ 
$$
\inf_c\av{e^{|\log w-c|}}J\le
\av{e^{|\log w-\av{\log w}J|}}J
\le \av{e^{\log w-\av{\log w}J}}J+\av{e^{-(\log w-\av{\log w}J)}}J\le 2[w]_{A_2}.
$$
Altogether, we have
$$
\sqrt{[w]_{A_2}}\le \catwo{w}\le 2[w]_{A_2}.
$$
\begin{remark}
One could also use a different choice of $Q$ to describe $A_2,$ e.\,g. $Q(t)=\cosh t.$
\end{remark}

An important question regarding averaging classes such as BMO and $A_2$ is whether they are invariant under monotone rearrangement and, furthermore, whether  the class constant is preserved. 
As a practical matter, such preservation allows one to deal only with monotone elements of the class when proving integral inequalities, making analysis easier.
Recall that for a function $f\colon I\to \mathbb{R},$ its decreasing rearrangement is the function $f^*$ on $I$ given by
$$
f^*(t)=\inf\{\lambda\colon |\{s\in I\colon f(s)>\lambda\}|\le t\}.
$$
Note that for a weight $w$ on $I,$ we have
$$
w^*=e^{(\log w)^*}.
$$

In~\cite{Klemes}, Klemes proved that rearrangement does not increase the $\BMO^1$ norm given by~\eqref{1} with $p=1,$ i.\,e., that if $\varphi\in\BMO(I),$ then $\varphi^*\in \BMO(I)$ and 
$$
\|\varphi^*\|_{1}\le \|\phi\|_{1}.
$$
In the same paper, he also claimed that a similar approach could be used to show that for all $p>1$ the $\BMO^p$ norm does not increase under rearrangement. We have not been able to extend Klemes's argument to that range of $p.$ However, we have been able to prove a preservation-of-constant result in the equivalent formulation involving taking infimum over all~$c.$ (Klemes's argument does not extend to this case either.) In fact, we show that a general functional $W$ given by \eqref{0.1}-\eqref{0.3} does not increase under rearrangement. Here is our main theorem.
\begin{theorem}\label{th1}
Let $\phi$ be a measurable real-valued function on $I.$ Then
\eq[eqmain]{
\ww(\phi^*,I) \leq \ww(\phi,I).
}

Consequently, if $\varphi\in\BMO(I),$ then $\varphi^*\in\BMO(I)$ and for all $p\ge1,$
$$
\cbmo{\varphi^*}_p\le \cbmo{\varphi}_p.
$$
Furthermore, if $w\in A_2(I),$ then $w^*\in A_2(I)$ and
$$
\catwo{w^*}\le\catwo{w}.
$$
\end{theorem}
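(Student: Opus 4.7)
The plan is to prove the main inequality $\ww(\phi^*, I) \le \ww(\phi, I)$ by showing that $\vv(\phi^*, L) \le \ww(\phi, I)$ for every subinterval $L \subset I$. Taking the supremum over $L$ in \eqref{0.3} then yields the theorem, and the $\BMO$ and $A_2$ corollaries follow by specializing to $\qq(t) = |t|^p$ and $\qq(t) = e^{|t|}$. By a standard approximation one may assume $\phi$ has a continuous distribution function, so that $\phi^*$ is continuous and $|\{\phi \in [\beta,\alpha]\}| = b-a$ whenever $\alpha = \phi^*(a)$ and $\beta = \phi^*(b)$ for $L = [a,b]$.

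The first step is a layer-cake change-of-variable identity: with $E = \{s \in I : \beta \le \phi(s) \le \alpha\}$, the equimeasurability of $\phi$ and $\phi^*$ gives $|E| = b-a$ and
\[\int_L \qq(\phi^*(t) - c)\,dt = \int_E \qq(\phi(s) - c)\,ds \qquad \text{for every } c \in \mathbb{R},\]
equivalently $\av{\qq(\phi^* - c)}L = \av{\qq(\phi - c)}E$. Thus the problem reduces to bounding averages of $\qq(\phi - c)$ over the level-set region $E$, which is typically not an interval.

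The second, more delicate step is to produce a subinterval $J \subset I$ with $|J| = b - a$ such that $\vv(\phi, J) \ge \vv(\phi^*, L)$. The idea is to choose $J$ so that $\phi|_J$ and $\phi^*|_L$ are equidistributed, or more generally so that $\phi|_J$ convex-dominates $\phi^*|_L$, which by convexity of $\qq$ would give $\av{\qq(\phi - c)}J \ge \av{\qq(\phi - c)}E$ for every $c$. The existence of such $J$ should follow from a continuity/intermediate-value argument on the one-parameter family $J_s = [s, s + (b-a)]$, using Hardy--Littlewood-type bounds to control the range of the relevant distributional quantities as $s$ varies.

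I expect the main obstacle to be that the target interval $J$ must serve \emph{simultaneously} for all $c$: a pointwise-in-$c$ inequality is insufficient, since $\vv$ takes an infimum in $c$, and the optimal $c$ for $\phi|_J$ generally differs from that for $\phi^*|_L$. Overcoming this requires using the monotone structure of $\phi^*$ together with convexity of $\qq$: on $E$ the values $\phi - c$ lie in the band $[\beta-c, \alpha-c]$, whereas on $I\setminus E$ they are further from any $c \in [\beta,\alpha]$, so moving mass from $E$ into $I\setminus E$ cannot decrease a convex even function of $\phi - c$. Once an admissible $J$ is constructed, $\vv(\phi^*, L) = \inf_c \av{\qq(\phi - c)}E \le \inf_c \av{\qq(\phi - c)}J = \vv(\phi, J) \le \ww(\phi, I)$ completes the argument.
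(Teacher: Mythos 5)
Your plan takes a genuinely different route from the paper, and one that the paper's introduction explicitly warns against: the authors note that Klemes's direct combinatorial construction for $\BMO^1$ could not be extended by them to $p>1$, and your equimeasurability-plus-majorization argument is a variant of exactly that approach. The paper instead proves the result via a Bellman-function scheme: a Vasyunin-type splitting lemma (Lemma~\ref{lem1}), a ``locally concave'' functional $G$ (Lemma~\ref{lem2}), a dyadic induction combined with a Lebesgue-point/dominated-convergence passage to the limit (Lemma~\ref{lem23}), and a two-sided truncation reduction.

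The first step of your plan is sound: with $E=\{\beta\le\phi\le\alpha\}$ and $L=[a,b]$, the functions $\phi|_E$ and $\phi^*|_L$ are equimeasurable, so $\av{\qq(\phi^*-c)}{L}=\av{\qq(\phi-c)}{E}$ for every $c$. The gap is in the second step, the existence of a single interval $J$ with $|J|=b-a$ for which $\av{\qq(\phi-c)}{J}\ge\av{\qq(\phi-c)}{E}$ holds for all $c$ simultaneously. Sliding $J_s=[s,s+(b-a)]$ gives one real parameter against a continuum of constraints (one per $c$, or equivalently a whole majorization profile), and a single intermediate-value argument cannot satisfy them all. The heuristic you offer in place of a proof is incorrect: it is not true that the values of $\phi$ on $I\setminus E$ lie farther from every $c\in[\beta,\alpha]$ than the values on $E$. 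Take $c$ near $\alpha$: a point of $J\setminus E$ where $\phi$ is just above $\alpha$ has $|\phi-c|$ arbitrarily small, while a point of $E\setminus J$ where $\phi$ is near $\beta$ has $|\phi-c|\approx\alpha-\beta$, which is as large as possible on $E$. The claimed dichotomy holds only at the midpoint $c=\tfrac{\alpha+\beta}{2}$, fails for all other $c\in[\beta,\alpha]$, and fails completely for $c\notin[\beta,\alpha]$. Without a construction controlling every $c$ --- or at least the minimizing $c$ for $\phi|_J$, which you correctly observe need not coincide with the minimizer for $\phi^*|_L$ --- the infimum defining $\vv(\phi,J)$ cannot be bounded below by $\vv(\phi^*,L)$, and the final chain of inequalities does not follow. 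The paper sidesteps the need for such a $J$ entirely by recursive splitting and the concavity inequality for $G$; its truncation at the levels $A=\phi^*(t_2)$, $B=\phi^*(t_1)$ plays a superficially similar role to your set $E$, but it is applied on all of $I$ rather than used to manufacture a single comparison interval.
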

To elaborate further on the novelty of the result, we note, without going into details, that in~\cite{sz} it is shown that rearrangement does not increase the class constant in any function class $A_\Omega$ on $I$ generated in a canonical fashion by a given non-convex set $\Omega$ in the plane. This statement covers $\BMO$ with the $\|\cdot\|_2$ norm and Muckenhoupt weights $A_p,$ $1< p\le\infty,$ with the usual choice of characteristic. However, none of the classes defined using \eqref{0.1}-\eqref{0.3} (other than $\BMO^2$) fall under this formulation. That said, some elements of our proof of Theorem~\ref{th1} are appropriate variants of those in~\cite{sz} and related literature, and we note the connection as we go along.

In addition to~\cite{Klemes} and~\cite{sz}, let us mention some further recent results about rearrangement in averaging classes. The monograph~\cite{Kor2}
deals with $\BMO^1$ and various weight classes on rectangles. The continuity of the rearrangement operator on BMO and VMO is studied in~\cite{Ryan2}. In~\cite{sz_vmo}, it is proved that 
rearrangement does not increase the Campanato norm in VMO on $[0,1].$ 
In~\cite{Ryan1} and ~\cite{studia} the dimensional behavior of rearrangement is studied in the context of $\BMO^1(\mathbb{R}^n)$ and the dyadic $\BMO^2([0,1]^n),$ respectively. 

We now turn to the proof of Theorem~\ref{th1}. We first prove it under the additional assumption that $Q$ in~\eqref{0.1} is strictly convex and then dispose of that assumption.

\section{Proof for strictly convex $Q$}
\label{str_con}
In this section, we assume that the function $Q$ in~\eqref{0.1} is strictly convex.
Then for any measurable $\phi$ defined on $J\subset I,$ the set $A=\{c \in \mathbb{R}\colon \vv_c(\phi,J)<\infty\}$ is convex, and the function $c \mapsto \vv_c(\phi,J)$ is strictly convex on $A$. If $A \ne \varnothing,$ there exists the unique $\cc \in A$ such that  
\eq[eq1]{
\vv_{\cc}(\phi,J) = \inf \{\vv_c(\phi,J)\colon c \in \mathbb{R}\} = \vv(\phi,J).
}
We denote this unique constant $\cc$ by $\cc(\phi,J)$.

Let $\phi_\pm$ be functions defined on intervals $I_\pm = [a_\pm, b_\pm]$; let $\alpha \in [0,1]$. The $\alpha$-concatenation of $\varphi^\pm$ is a function $\varphi_\alpha$ on $I$ defined as follows: 
$$
\phi_\alpha(s) = 
\begin{cases}
\phi_-(a_- + (b_--a_-)\frac{s}{\alpha}),\quad s \in [0,\alpha),\\
\phi_+(a_+ + (b_+-a_+)\frac{s-\alpha}{1-\alpha}),\quad s \in [\alpha,1].\\
\end{cases}
$$ 
Thus, the restriction of $\phi_\alpha$ to $[0,\alpha]$ is the rescaling of $\phi_-$ from $I_-,$ and the restriction of $\phi_\alpha$ to $[\alpha,1]$ is the rescaling of $\phi_+$ from $I_+$. It is easy to see that for any $c \in \mathbb{R}$ we have
$$
\vv_c(\phi_\alpha, I) = \alpha \vv_c(\phi_-,I_-) + (1-\alpha) \vv_c(\phi_+,I_+).
$$ 
From this formula it is clear that for $\alpha \in (0,1)$ the left-hand side is finite if and only if both summands in the right-hand side are finite.

\begin{remark}\label{rem2}
The function $(c,\alpha) \mapsto \vv_c(\phi_\alpha, I)$ is finite on some subset of $\mathbb{R}\times [0,1]$ and continuous on it. This function is linear with respect to $\alpha$ and strictly convex with respect to $c$. If there is some $c \in \mathbb{R}$ such that both $\vv_c(\phi_+,I_+)<\infty$ and $\vv_c(\phi_-,I_-)<\infty,$ then the function $\alpha \mapsto
\cc(\phi_\alpha,I)$ is correctly defined and continuous on $[0,1]$. 
\end{remark}
The following lemma is an adaption to the present setting of a classical splitting argument due to V.~Vasyunin, which first appeared in \cite{vas1}.
 
\begin{lemma}[Splitting Lemma]\label{lem1}
Let $0<\tilde\ve<\ve,$ $0 < \delta < \min(\frac{1}{2}, 1 - \frac{\qq(\tilde\ve)}{\qq(\ve)})$. If $\phi$ is a function on an interval $J=[a,b]$ such that $\ww(\phi,J) \leq \qq(\tilde\ve),$ then there is $t \in (\delta, 1-\delta)$ with the following property: if $J_- =[a,(1-t)a+tb]$ and $J_+ = [(1-t)a+tb,b],$ and functions $\phi_\pm$ are restrictions of  $\phi$ to $J_\pm,$ respectively, then 
\eq[eq8]{
\vv(\phi_\alpha, I) \leq \qq(\eps), \qquad \text{ for all } \alpha \in [0,1].
}
\end{lemma}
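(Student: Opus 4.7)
The strategy is to exhibit a single splitting point $t$ and a single value of $c$ that work uniformly in $\alpha$. Let $c_0 := \cc(\phi, J)$ be the unique minimizer of $c \mapsto \vv_c(\phi, J)$, which exists by strict convexity of $\qq$ and the finiteness $\vv(\phi, J) \le \ww(\phi, J) \le \qq(\tilde\ve)$. For $t \in (0,1)$, split $J = [a,b]$ at the fraction $t$ into $J_-(t) = [a, a+t(b-a)]$ and $J_+(t) = [a+t(b-a), b]$, and set $f_\pm(t) := \vv_{c_0}(\phi|_{J_\pm(t)}, J_\pm(t))$. Both are continuous in $t$ (since $\qq(\phi-c_0)$ is integrable on $J$), and they satisfy
\[
t f_-(t) + (1-t) f_+(t) = \vv_{c_0}(\phi, J) = \vv(\phi, J) \le \qq(\tilde\ve).
\]
Since $\vv(\phi_\alpha, I) \le \vv_{c_0}(\phi_\alpha, I) = \alpha f_-(t) + (1-\alpha) f_+(t) \le \max(f_-(t), f_+(t))$, the problem reduces to producing one $t \in (\delta, 1-\delta)$ with $\max(f_-(t), f_+(t)) \le \qq(\ve)$.

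Setting $\eta := \qq(\tilde\ve)/\qq(\ve)$, so that the hypothesis reads $\delta < 1-\eta$, the identity above yields four basic sufficient conditions: (a) $t \ge \eta$ gives $f_-(t) \le \qq(\tilde\ve)/t \le \qq(\ve)$; (b) symmetrically, $t \le 1-\eta$ gives $f_+(t) \le \qq(\ve)$; (c) if $f_-(t) \le f_+(t)$, then $f_-(t)$ lies below the weighted average, so $f_-(t) \le \qq(\tilde\ve) \le \qq(\ve)$; and (d) the symmetric statement for $f_+$. In the easier regime $\eta \le 1/2$, any $t$ in the nonempty interval $[\max(\delta,\eta), \min(1-\delta, 1-\eta)]$ fulfills both (a) and (b), which finishes this case.

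The main obstacle is the regime $\eta > 1/2$, in which (a) and (b) cannot simultaneously be achieved by any single $t$; here one must combine a pigeonhole-type bound with a sign-comparison bound via the intermediate value theorem. I would apply IVT to the continuous function $G(t) := f_-(t) - f_+(t)$ on the interval $[1-\eta, \eta]$, which is contained in $(\delta, 1-\delta)$ by the hypothesis $\delta < 1-\eta$. If $G(1-\eta) \le 0$, pick $t = 1-\eta$ and combine (b) with (c); if $G(\eta) \ge 0$, pick $t = \eta$ and combine (a) with (d); otherwise $G(1-\eta) > 0 > G(\eta)$, so IVT yields a zero $t^* \in (1-\eta, \eta)$, and at this point $f_-(t^*) = f_+(t^*) = \vv(\phi, J) \le \qq(\tilde\ve) \le \qq(\ve)$. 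The precise role of the condition $\delta < 1 - \qq(\tilde\ve)/\qq(\ve)$ is exactly to place the critical points $1-\eta$ and $\eta$ inside the admissible range for $t$.
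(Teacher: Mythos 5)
Your proof is correct. You use the same core ingredients as the paper---the minimizing constant $c_0=\cc(\phi,J)$, the weighted-average identity $t f_-(t)+(1-t)f_+(t)=\vv(\phi,J)\le \qq(\tilde\ve)$, continuity in $t$, and the intermediate value theorem---and like the paper you reduce the claim to finding $t\in(\delta,1-\delta)$ with $\max(f_-(t),f_+(t))\le \qq(\ve)$ (the paper phrases this via linearity of $\Psi(t,\cdot)=\vv_{c_0}(\phi_\cdot,I)$ in $\alpha$). The route through the final step is genuinely different, though. The paper applies the IVT to $t\mapsto f_-(t)=\Psi(t,1)$ after noting $\Psi(\delta,0)<\qq(\ve)$ and $\Psi(1-\delta,1)<\qq(\ve)$: in the nontrivial case it locates $t$ with $\Psi(t,1)=\qq(\ve)$, and then uses the ``diagonal'' bound $\Psi(t,t)\le\qq(\tilde\ve)<\qq(\ve)$ together with affinity in $\alpha$ to force $\Psi(t,0)<\qq(\ve)$ as well; this handles all $\eta=\qq(\tilde\ve)/\qq(\ve)$ uniformly and with no casework. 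You instead apply the IVT to $G=f_--f_+$ and split into $\eta\le 1/2$ (where no IVT is needed) and $\eta>1/2$ (where the sign analysis of $G$ at $1-\eta$ and $\eta$ kicks in), using the clean observations (a)--(d). Your version is more elementary in the sense that it never invokes the pivot $\Psi(t,t)$ and its balance point $f_-(t^*)=f_+(t^*)=\vv(\phi,J)$ is conceptually transparent, at the cost of an extra case split; the paper's version is shorter and casework-free but relies on the somewhat slicker use of the diagonal and affinity in $\alpha$.
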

\begin{proof}
Let $\cc = \cc(\phi,J)$.
We note that for any choice of $t\in(0,1),$ 
$$
|J_-| \vv_\cc(\phi_-, J_-) + |J_+| \vv_\cc(\phi_+, J_+) = |J|\vv_\cc(\phi, J) \leq |J|\qq(\tilde\eps).
$$
Therefore 
\eq[eq3]{
\vv_\cc(\phi_\pm, J_\pm) \leq \frac{|J|}{|J_\pm|}\,\qq(\tilde\eps). 
}
If $t = \delta,$ then 
$$
|J_+| = (1 - \delta)|J| > \frac{\qq(\tilde\eps)}{\qq(\eps)}\,|J|,
$$
and, from~\eqref{eq3}, 
\eq[eq4]{
\vv_\cc(\phi_+, J_+) \leq\frac{1}{1-\delta}\, \qq(\tilde\eps) < \qq(\eps).
}
Similarly, if $t = 1-\delta,$ then 
\eq[eq5]{
\vv_\cc(\phi_-, J_-) < \qq(\eps).
}

Let $\Psi(t,\alpha) = \vv_\cc(\phi_\alpha, I)$. Note that if $\alpha=t,$ then $\phi_\alpha$ is a rescaling of $\phi$ to $I$; therefore,
$$
\Psi(t,t) =  \vv_\cc(\phi, J) \leq \qq(\tilde\eps).
$$
The function $\Psi$ is clearly continuous on $[\delta, 1-\delta]\times [0,1]$ and linear in $\alpha$ for any fixed $t$. From~\eqref{eq4} and~\eqref{eq5} we have
$$
\Psi(\delta,0)<\qq(\eps), \qquad \Psi(1-\delta,1)<\qq(\eps).
$$
It follows that there is some $t \in [\delta,1-\delta]$ such that $\Psi(t,\alpha) \leq \qq(\eps)$ for all $\alpha \in [0,1]$. Indeed, 
if $t=\delta$ does not fit, then $\Psi(\delta,1)> \qq(\eps)>\Psi(1-\delta,1)$. By continuity, we can find $t \in [\delta, 1-\delta]$ such that 
$\Psi(t,1) = \qq(\eps)$.  For this choice of $t$ we have $\Psi(t,t) < \qq(\eps),$ thus by linearity with respect to $\alpha,$ we have $\Psi(t,\alpha) \leq \qq(\eps)$ for all $\alpha \in [0,1]$.
\end{proof}

We now introduce a suitable analog of the Bellman function that was used in~\cite{sz} to prove the rearrangement result for classes $A_\Omega.$ This function is shown to be locally concave in a certain sense (Lemma~\ref{lem2}), which allows a  variant of the classical Bellman induction (Lemma~\ref{lem23}).

Let $\phi$ be a function on an interval $J$. Define
\eq[eq180504]{
G(\phi) = \begin{cases}
0, &  \qq(\eps) \geq \vv_\eps(\phi,J) \text{ or } \cc(\phi,J) \geq \eps,\\
\qq(\eps) - \vv_\eps(\phi,J), &  \qq(\eps) < \vv_\eps(\phi,J) \text{ and } \cc(\phi,J) < \eps.
\end{cases}
}
We note that always $G \leq 0,$ and that if 
\eq[eq6]{
G(\phi) > \qq(\eps) - \vv_\eps(\phi,J), \text{ then }  \cc(\phi,J) \geq \eps.
}

\begin{lemma}\label{lem2}
Let $\phi_\pm$ be functions on arbitrary intervals $I_\pm$ such that $$
\vv(\phi_\alpha, I) \leq \qq(\eps)
$$
for all $\alpha \in [0,1]$. Then, for any $\alpha \in [0,1]$ we have
$$
G(\phi_\alpha) \geq \alpha G(\phi_-) + (1-\alpha) G(\phi_+).
$$
\end{lemma}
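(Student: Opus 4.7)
The plan is to analyze $G(\phi_\alpha)$ through two auxiliary functions of $\alpha$: $f(\alpha):=\qq(\eps)-\vv_\eps(\phi_\alpha,I)$, which is affine in $\alpha$ by the concatenation identity preceding Remark~\ref{rem2}, and $g(\alpha):=\cc(\phi_\alpha,I)$, which is continuous by Remark~\ref{rem2}. Writing $\cc_\pm:=\cc(\phi_\pm,I_\pm)$, we have $g(0)=\cc_+$ and $g(1)=\cc_-$. A direct sign argument using the strict convexity of $c\mapsto\vv_c(\phi_\pm,I_\pm)$ (so its derivative has unique zero at $\cc_\pm$) shows that $g$ is monotone on $[0,1]$ and hence takes values in the closed interval with endpoints $\cc_\pm$. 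With these conventions, $G(\phi_\alpha)=0$ whenever $g(\alpha)\ge\eps$ or $f(\alpha)\ge 0$, and $G(\phi_\alpha)=f(\alpha)$ otherwise.

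The pivotal use of the hypothesis $\vv(\phi_\alpha,I)\le\qq(\eps)$ will be at any point where $g(\alpha)=\eps$: there
$$
\vv_\eps(\phi_\alpha,I)=\vv_{g(\alpha)}(\phi_\alpha,I)=\vv(\phi_\alpha,I)\le\qq(\eps),
$$
so $f(\alpha)\ge 0$. In other words, the affine function $f$ cannot be negative at any point where $g$ attains the level $\eps$.

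I would then split into three cases. (a)~If $\cc_\pm\ge\eps$, monotonicity gives $g\ge\eps$ throughout $[0,1]$, so $G\equiv 0$ and the inequality is trivial. (b)~If $\cc_\pm<\eps$, monotonicity gives $g<\eps$ throughout, so $G(\phi_\alpha)=\min\{0,f(\alpha)\}$; since $\min\{0,\cdot\}$ is concave and $f$ is affine, $\alpha\mapsto G(\phi_\alpha)$ is concave, which is strictly stronger than the required chord inequality. (c)~In the mixed case---say $\cc_+<\eps\le\cc_-$---monotonicity and continuity of $g$ produce some $\alpha^*\in(0,1]$ with $g(\alpha^*)=\eps$, and the pivotal observation forces $f(\alpha^*)\ge 0$. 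On $[\alpha^*,1]$ we have $g\ge\eps$, so $G=0$; on $[0,\alpha^*)$ we have $g<\eps$, so $G(\phi_\alpha)=\min\{0,f(\alpha)\}$. Either $f\ge 0$ on all of $[0,\alpha^*]$ (in which case $G\equiv 0$), or $f$ rises from $f(0)<0$ through a unique zero $\alpha^{**}\in(0,\alpha^*]$, which forces $f(1)\ge 0$. In this subcase the chord inequality follows piecewise: on $[0,\alpha^{**}]$ the difference $G(\phi_\alpha)-[\alpha G(\phi_-)+(1-\alpha)G(\phi_+)]$ reduces to $\alpha f(1)\ge 0$, while on $[\alpha^{**},1]$ we have $G=0\ge(1-\alpha)f(0)$ since $f(0)<0$. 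The symmetric mixed case $\cc_-<\eps\le\cc_+$ is identical.

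The main obstacle is the bookkeeping in case~(c): $G$ has a kink where it transitions from the affine piece $f$ to the constant $0$, and the chord from $(0,G(\phi_+))$ to $(1,G(\phi_-))$ must lie below this kinked graph. Without the hypothesis $\vv(\phi_\alpha,I)\le\qq(\eps)$, configurations with $f(0),f(1)<0$ and $\cc_+<\eps<\cc_-$ can easily violate the chord inequality, so this hypothesis is essential rather than cosmetic.
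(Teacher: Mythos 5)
Your proof is correct, and the pivotal observation---that the hypothesis $\vv(\phi_\alpha,I)\le\qq(\eps)$ forces $f(\alpha)=\qq(\eps)-\vv_\eps(\phi_\alpha,I)\ge 0$ at any point where $g(\alpha)=\cc(\phi_\alpha,I)=\eps$---is exactly the key step in the paper's own argument. Where you diverge is in how that observation is deployed. The paper organizes the proof as a short contradiction: assume $G(\phi_\alpha)<0$, and if one of the two inequalities $G(\phi_\pm)\le f(0),f(1)$ fails (so $\cc(\phi_\pm,I_\pm)\ge\eps$ and $\vv_\eps(\phi_\pm,I_\pm)>\qq(\eps)$), then the intermediate value theorem supplies a single $\beta$ with $g(\beta)=\eps$ at which the affine function $\vv_\eps(\phi_\cdot,I)$ is a convex combination of two quantities both exceeding $\qq(\eps)$, contradicting the hypothesis. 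This uses only the \emph{continuity} of $g$ from Remark~\ref{rem2} and the affinity of $f$. You instead reconstruct the graph of $\alpha\mapsto G(\phi_\alpha)$ explicitly in three cases based on the positions of $\cc_\pm$ relative to $\eps$. This requires the extra fact that $g$ is \emph{monotone} in $\alpha$ (and hence takes values in the interval with endpoints $\cc_\pm$), which the paper never needs; your monotonicity claim is true and provable by the subgradient/sign argument you indicate, but it is a genuine additional ingredient, and your write-up only sketches it. Once monotonicity is in hand, your case~(b) cleanly exhibits $\alpha\mapsto G(\phi_\alpha)$ as the concave function $\min\{0,f(\alpha)\}$, and your piecewise verification in case~(c) is correct (the identity $G(\phi_\alpha)-[\alpha G(\phi_-)+(1-\alpha)G(\phi_+)]=\alpha f(1)$ on $[0,\alpha^{**}]$ checks out). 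Net assessment: both proofs work and share the crucial idea; the paper's version is shorter and uses weaker auxiliary facts, while yours gives a more complete structural picture of $G(\phi_\alpha)$ at the cost of the unproved-but-true monotonicity of $g$.
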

\begin{proof}
Let $\alpha \in (0,1)$. If $G(\phi_\alpha) = 0,$ there is nothing to prove. Thus, we suppose $G(\phi_\alpha) = \qq(\eps) - \vv_\eps(\phi_\alpha,I) < 0$. Then, by the definition of $G$ we have $\cc(\phi_\alpha,I) < \eps$. 
If 
\eq[eq7]{
G(\phi_-) \leq \qq(\eps) - \vv_\eps(\phi_-,I_-)\quad\text{and}\quad
G(\phi_+) \leq \qq(\eps) - \vv_\eps(\phi_+,I_+),
}
then
$$
\alpha G(\phi_-) + (1-\alpha) G(\phi_+) \leq \qq(\eps) - \alpha\vv_\eps(\phi_-,I_-) - (1-\alpha)\vv_\eps(\phi_+,I_+) = \qq(\eps) - \vv_\eps(\phi_\alpha,I) = G(\phi_\alpha) .
$$

Assume now that one of the inequalities in~\eqref{eq7} does not hold, say 
$$
G(\phi_-) > \qq(\eps) - \vv_\eps(\phi_-,I_-).
$$
Then, by~\eqref{eq6} we have $\cc(\phi_-,I_-) \geq \eps > \cc(\phi_\alpha,I)$. By continuity, we can find $\beta \in (\alpha, 1]$ such that 
$\cc(\phi_\beta,I) = \eps$. Then,
\begin{align*}
\vv(\phi_\beta,I) &= \vv_\eps(\phi_\beta,I) = \beta \vv_\eps(\phi_-,I_-) + (1-\beta) \vv_\eps(\phi_+,I_+)\\
&=\frac{1-\beta}{1-\alpha}\,\vv_\eps(\phi_\alpha,I) + \frac{\beta - \alpha}{1-\alpha} \,\vv_\eps(\phi_-,I_-)  > \qq(\eps),
\end{align*}
which contradicts the hypothesis of the lemma.
\end{proof}

\begin{lemma}\label{rem3}
Let $\phi$ be a bounded function on $I$. Then, for almost all $s \in I$ and any sequence of subintervals $J_n$ of $I$ containing $s$ and such that $|J_n| \to 0,$ we have, for any $c \in \mathbb{R},$ 
\begin{gather}
\label{eq180501}
\vv_c(\phi, J_n) \to \qq(\phi(s) - c),\\
\label{eq180502}
\cc(\phi, J_n) \to \phi(s).
\end{gather}
Therefore, 
\eq[eq180503]{
G(\phi\big|_{_{J_n}}) \to 
\begin{cases}
0, & \phi(s) \geq 0,\\
\qq(\eps) - \qq(\phi(s)-\eps),& \phi(s)<0. 
\end{cases}
}
\end{lemma}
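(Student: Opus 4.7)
The three claims \eqref{eq180501}--\eqref{eq180503} form a single Lebesgue-differentiation package, and I would prove them in the order stated. For \eqref{eq180501}, fix $c \in \mathbb{R}$ and note that $s \mapsto \qq(\phi(s)-c)$ is bounded and measurable, since $\phi$ is bounded and $\qq$ is convex (hence continuous). The standard Lebesgue differentiation theorem for shrinking intervals containing $s$ then yields a null set $N_c$ outside of which $\vv_c(\phi, J_n) \to \qq(\phi(s)-c)$. The countable union $N = \bigcup_{c\in\mathbb{Q}} N_c$ is still null, so on $I\setminus N$ the convergence holds for every rational $c$. To extend to all real $c$, I would exploit equicontinuity: with $M = \|\phi\|_\infty$ and any $K>0$, the convex function $\qq$ is Lipschitz on $[-M-K, M+K]$ with some constant $L_K$, giving
\[
|\vv_c(\phi,J_n) - \vv_{c'}(\phi,J_n)| \leq L_K\,|c-c'|, \qquad c,c' \in [-K,K],
\]
uniformly in $n$. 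Together with pointwise convergence at rationals and continuity of the limit $c \mapsto \qq(\phi(s)-c)$, this upgrades to uniform convergence on compact sets of $c$.

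For \eqref{eq180502}, I would first check that $\cc(\phi, J_n) \in [-M, M]$: if $c>M$ then $\phi(x)-c < 0$ a.e.\ on $J_n$, so $\qq(\phi(x)-c) = \qq(c - \phi(x))$ is strictly increasing in $c$ (as $\qq$ is increasing on $[0,\infty)$), and symmetrically on the left. Let $c^*$ be any subsequential limit of $c_n := \cc(\phi, J_n)$. By the uniform convergence on $[-M,M]$ established above, $\vv_{c_n}(\phi, J_n) \to \qq(\phi(s)-c^*)$ along that subsequence. On the other hand, since $c_n$ minimizes $\vv_c(\phi,J_n)$,
\[
\vv_{c_n}(\phi,J_n) \leq \vv_{\phi(s)}(\phi,J_n) \to \qq(0),
\]
while $\vv_c(\phi,J_n) \geq \qq(0)$ for every $c$ because $\qq(0) = \min \qq$. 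Therefore $\qq(\phi(s)-c^*) = \qq(0)$, and strict convexity of $\qq$ (evenness plus strict convexity makes $\qq$ strictly increasing on $[0,\infty)$) forces $c^* = \phi(s)$. Since every subsequential limit equals $\phi(s)$, the whole sequence $\cc(\phi,J_n)$ converges to $\phi(s)$.

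For \eqref{eq180503}, I would restrict to the full-measure set where \eqref{eq180501} with $c = \eps$ and \eqref{eq180502} hold simultaneously, and split into cases on $\phi(s)$. If $\phi(s) > \eps$, then $\cc(\phi,J_n) \geq \eps$ eventually and $G = 0$. If $0 \leq \phi(s) \leq \eps$, then $|\phi(s)-\eps| \leq \eps$, so $\vv_\eps(\phi,J_n) \to \qq(\phi(s)-\eps) \leq \qq(\eps)$; for $\phi(s)>0$ the inequality is strict and $G=0$ eventually, while at $\phi(s)=0$ the value $G$ (which is either $0$ or $\qq(\eps)-\vv_\eps$) tends to $0$ since $\vv_\eps \to \qq(\eps)$. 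If $\phi(s) < 0$, then $\cc(\phi, J_n) < \eps$ eventually and $|\phi(s)-\eps| > \eps$ gives $\vv_\eps \to \qq(\phi(s)-\eps) > \qq(\eps)$, so $G = \qq(\eps) - \vv_\eps \to \qq(\eps) - \qq(\phi(s)-\eps)$, matching \eqref{eq180503}.

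The only genuinely nontrivial step is the uniformity-in-$c$ upgrade in the first paragraph: Lebesgue differentiation is a statement for each fixed $c$, and converting it into convergence of the \emph{argmin} $\cc(\phi, J_n)$ requires controlling all $c$ simultaneously. Once the uniform Lipschitz estimate on $c \mapsto \vv_c(\phi,J_n)$ is in place, the argmin convergence and the case analysis for $G$ are routine.
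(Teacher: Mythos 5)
Your proof is correct and follows the same overall skeleton as the paper's: apply Lebesgue differentiation for each rational $c$, take a countable intersection, upgrade to all $c$, deduce convergence of the minimizer, and then run a case analysis for $G$. Where you differ is in the two technical upgrades. To pass from rational $c$ to all $c$, the paper invokes the general fact that a sequence of convex functions converging pointwise on a dense set converges locally uniformly; you instead derive a uniform Lipschitz bound $|\vv_c(\phi,J_n)-\vv_{c'}(\phi,J_n)|\le L_K|c-c'|$ from the boundedness of $\phi$ and the local Lipschitz property of $\qq$, which is more elementary and self-contained but leans on the hypothesis $\|\phi\|_\infty<\infty$ (whereas the convex-convergence principle would apply more generally). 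For the argmin convergence \eqref{eq180502}, the paper simply cites coercivity ($F(\pm\infty)=\infty$) to conclude convergence of minimizers; you give an explicit argument --- locate all $c_n=\cc(\phi,J_n)$ in $[-M,M]$, pass to a subsequential limit $c^*$, sandwich $\qq(\phi(s)-c^*)$ between $\qq(0)$ from both sides using minimality of $c_n$ and $\qq\ge\qq(0)$, and conclude $c^*=\phi(s)$ by strict convexity. One small cleanup: in that step, the clause ``$\vv_c(\phi,J_n)\ge\qq(0)$ for every $c$'' is a slight detour; what you actually use is that $\qq(\phi(s)-c^*)\ge\qq(0)$ directly because $\qq(0)=\min\qq$. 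The case analysis for \eqref{eq180503} matches the paper's and is handled correctly, including the boundary case $\phi(s)=0$.
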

\begin{proof}
For any $c\in\mathbb{R}$ fixed~\eqref{eq180501} holds for almost any $s \in I$ by the Lebesgue Differentiation Theorem. Therefore, for all $s$ from a subset of full measure in $I,$ relation~\eqref{eq180501} holds for all $c \in \mathbb{Q}$. For any such $s$ and a sequence of subintervals $J_n$ fixed, the sequence of strictly convex functions $F_n\colon c \mapsto \vv_c(\phi, J_n)$ converges to a strictly convex function $F\colon c \mapsto \qq(\phi(s) - c)$ on $\mathbb{Q}$. It follows from the convexity of $Q$ that $F_n$ converges to $F$ pointwise on $\mathbb{R}$ and uniformly on compact subsets. Moreover, since $F(\pm\infty)=\infty,$ we see that the points of minimum of $F_n$ converge to the point of minimum of $F,$ which is exactly~\eqref{eq180502}. Relation~\eqref{eq180503} follows from 
\eqref{eq180501} with $c=\eps,$ \eqref{eq180502}, and the definition~\eqref{eq180504} of the function~$G$; it can be checked directly for each of the four cases: $\phi(s)<0,$ $\phi(s)=0,$ $0< \phi(s)\le\eps,$ and $\phi(s)>\eps.$
\end{proof}

\begin{lemma}
\label{lem23}
Let $\phi$ be a non-negative bounded function on $J$. If $\ww(\phi,J)<\qq(\eps),$ then either $\cc(\phi,J)\geq\eps$ or $\vv_\eps(\phi,J)\leq \qq(\eps)$.
\end{lemma}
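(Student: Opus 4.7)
The plan is to recast the conclusion in terms of the function $G$ from~\eqref{eq180504} and then drive the scale to zero via a Bellman-type induction. Observe that the disjunction ``$\cc(\phi,J) \geq \eps$ or $\vv_\eps(\phi,J) \leq \qq(\eps)$'' is precisely the condition $G(\phi) = 0$. Since $G \leq 0$ always, it suffices to prove $G(\phi) \geq 0$.

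Because $\ww(\phi,J) < \qq(\eps)$ strictly, continuity of $\qq$ allows us to fix $\tilde\eps \in (0,\eps)$ with $\ww(\phi,J) \leq \qq(\tilde\eps)$ and then pick $\delta$ satisfying $0 < \delta < \min(\tfrac12, 1-\qq(\tilde\eps)/\qq(\eps))$. The functional $\ww$ is monotone under restriction, so every subinterval $K \subset J$ still satisfies $\ww(\phi|_K, K) \leq \qq(\tilde\eps)$, and thus the Splitting Lemma~\ref{lem1} may be iterated indefinitely. This produces a sequence of nested binary partitions $\mathcal{P}_0 = \{J\}, \mathcal{P}_1, \mathcal{P}_2, \ldots$ such that each $K \in \mathcal{P}_n$ is split into halves $K_\pm \in \mathcal{P}_{n+1}$ with $|K_\pm|/|K| \in (\delta, 1-\delta)$, and the hypothesis of Lemma~\ref{lem2} is met by the pair $(\phi|_{K_-}, \phi|_{K_+})$. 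Applying that lemma at the concatenation parameter equal to $|K_-|/|K|$---and using the evident invariance of $G$ under affine rescaling of the domain---gives
$$
G(\phi|_K) \;\geq\; \frac{|K_-|}{|K|}\, G(\phi|_{K_-}) + \frac{|K_+|}{|K|}\, G(\phi|_{K_+}).
$$
A straightforward induction on $n$ telescopes these inequalities to
$$
G(\phi) \;\geq\; \sum_{K \in \mathcal{P}_n} \frac{|K|}{|J|}\, G(\phi|_K),
$$
while every cell in $\mathcal{P}_n$ has length at most $(1-\delta)^n |J|$.

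To pass to the limit, fix $M$ with $|\phi|\leq M$; then $G(\phi|_K) \geq \qq(\eps) - \qq(M+\eps)$ uniformly in $K$. For every $s \in J$ at which the Lebesgue Differentiation Theorem applies, the unique cell $K_n(s) \in \mathcal{P}_n$ containing $s$ shrinks to $\{s\}$, so Lemma~\ref{rem3}, together with $\phi(s) \geq 0$, yields $G(\phi|_{K_n(s)}) \to 0$. Writing the right-hand sum above as $\tfrac{1}{|J|}\int_J G(\phi|_{K_n(s)})\,ds$, the dominated convergence theorem forces it to $0$, and hence $G(\phi) \geq 0$ as required. The main subtlety is ensuring that the Splitting Lemma can in fact be iterated indefinitely, which depends on the monotonicity of $\ww$ under restriction together with the strict inequality $\ww(\phi,J) < \qq(\eps)$ leaving room to choose the auxiliary $\tilde\eps < \eps$.
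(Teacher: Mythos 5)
Your proof is correct and follows essentially the same route as the paper: reduce to showing $G(\phi)\geq 0$, iterate the Splitting Lemma to build a nested binary partition at each scale of which Lemma~\ref{lem2} applies, telescope the resulting concavity inequalities, and pass to the limit by dominated convergence via Lemma~\ref{rem3}. You make explicit two details the paper leaves implicit --- the affine-rescaling invariance of $G$, needed to identify $G(\phi_\alpha)$ with $G(\phi|_K)$, and the concrete lower bound $G \geq \qq(\eps) - \qq(M+\eps)$ supplying the dominating function.
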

\begin{proof}
It is enough to prove that $G(\phi)=0$.
Find $\tilde\eps \in (0, \eps)$ such that $\ww(\phi,J)<\qq(\tilde\eps)$ and take $\delta$ as in Lemma~\ref{lem1}. That lemma gives two intervals, $J_-$ and $J_+,$ such that~\eqref{eq8} holds and $\frac{|J_\pm|}{|J|}\in (\delta,1-\delta)$. Then, by Lemma~\ref{lem2} we have 
$$
|J|G(\phi) \geq |J_-|G(\phi\big|_{J^-}) +|J_+|G(\phi\big|_{J^+}). 
$$
We repeat this procedure and split each of $J_\pm$ into two subintervals, apply consecutively Lemmas~\ref{lem1} and~\ref{lem2}, and so forth. After $N$ steps we will have a splitting of $J$ into $2^N$ subintervals, say $J^N_{k},$ $k=1,\dots,2^N,$ such that $\frac{|J^N_k|}{|J|}\in (\delta^N, (1-\delta)^N)$ for each $k,$ and
$$
|J|G(\phi) \geq \sum_{k=1}^{2^N} |J^N_k| G(\phi\big|_{J^N_{k}}). 
$$
Since $\phi$ is bounded, the set $\big\{G(\phi|_{J^N_{k}})\big\}_{k,N}$ is bounded. By the dominated convergence theorem and Lemma~\ref{rem3}, the limit of the right-hand side as $N\to\infty$ is zero. Therefore, $G(\phi)\geq 0$.
\end{proof}

Observe that the functionals $\vv_{c}$ and $\cc$ have additive homogeneity: for a function $\varphi$ on $J$ and any $\tau\in\mathbb{R},$
$$
\vv_{\pm c+\tau}(\pm\phi+\tau,J)=\vv_{c}(\phi,J)\quad \text{and}\quad\cc(\pm\varphi+\tau,J)=\pm\cc(\varphi)+\tau.
$$
Applying Lemma~\ref{lem23} to $\varphi-A$ and $B-\varphi,$ we obtain the following corollary.
\begin{corollary}\label{cor1}
Let $A,B\in\mathbb{R},$ $A \leq B$; let $\eps>0$; and let $\phi \colon J \to [A,B]$ satisfy $\ww(\phi, J) < \qq(\eps)$. Then either 
$$
\cc(\phi,J)\geq A+\eps \quad \text{ or } \quad  \vv_{A+\eps}(\phi,J)\leq \qq(\eps).
$$
Similarly, either 
$$
\cc(\phi,J)\leq B-\eps \quad \text{ or } \quad \vv_{B-\eps}(\phi,J)\leq \qq(\eps).
$$
\end{corollary}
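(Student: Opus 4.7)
The plan is to reduce each of the two alternatives directly to Lemma~\ref{lem23} using the additive homogeneity identities for $\vv_c$ and $\cc$ highlighted just before the statement of the corollary. Note first that these identities combined with the evenness of $Q$ also imply the invariance of the functional $\ww$: for any $\tau\in\mathbb{R}$, $\vv_c(\pm\phi+\tau,J)=\vv_{\mp(\tau-c)\mp\tau+c}\cdots$; more directly, a change of the dummy variable $c\mapsto c\mp\tau$ in \eqref{0.2} shows $\vv(\pm\phi+\tau,J)=\vv(\phi,J)$, and taking the supremum over subintervals gives $\ww(\pm\phi+\tau,J)=\ww(\phi,J)$.

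For the first alternative, I would set $\psi=\phi-A$. Since $\phi\colon J\to[A,B]$, the function $\psi$ is non-negative and bounded on $J$, and by the invariance noted above $\ww(\psi,J)=\ww(\phi,J)<\qq(\eps)$. Thus Lemma~\ref{lem23} applies to $\psi$ and yields either $\cc(\psi,J)\ge\eps$ or $\vv_\eps(\psi,J)\le\qq(\eps)$. Translating back with $\cc(\psi,J)=\cc(\phi,J)-A$ and $\vv_\eps(\psi,J)=\av{Q(\phi-A-\eps)}{J}=\vv_{A+\eps}(\phi,J)$ gives the desired dichotomy: either $\cc(\phi,J)\ge A+\eps$ or $\vv_{A+\eps}(\phi,J)\le\qq(\eps)$.

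For the second alternative, I would instead set $\psi=B-\phi$, which is again non-negative and bounded on $J$. Using the evenness of $Q$, $\vv_c(B-\phi,J)=\av{Q(\phi-(B-c))}{J}=\vv_{B-c}(\phi,J)$, so taking the infimum over $c\in\mathbb{R}$ gives $\vv(B-\phi,J)=\vv(\phi,J)$ and therefore $\ww(\psi,J)=\ww(\phi,J)<\qq(\eps)$. Lemma~\ref{lem23} applied to $\psi$ then produces either $\cc(\psi,J)\ge\eps$ or $\vv_\eps(\psi,J)\le\qq(\eps)$. Since $\cc(B-\phi,J)=B-\cc(\phi,J)$ and $\vv_\eps(B-\phi,J)=\vv_{B-\eps}(\phi,J)$, this translates to the stated conclusion.

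There is no real obstacle here; the only thing to be careful about is the bookkeeping of the substitutions and invoking evenness of $Q$ in the reflection step. Everything else is an immediate consequence of the uniqueness of the minimizer $\cc(\cdot,J)$ established in Section~\ref{str_con} together with Lemma~\ref{lem23} itself.
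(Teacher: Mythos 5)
Your argument is exactly the paper's: the authors observe the additive homogeneity of $\vv_c$ and $\cc$ immediately before the corollary and then state that it follows by applying Lemma~\ref{lem23} to $\phi-A$ and $B-\phi$. Your bookkeeping with the invariance of $\ww$ and the translation of $\cc$ and $\vv_\eps$ is correct and matches the intended reduction.
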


\begin{lemma}\label{lem3}
Let $f \colon \mathbb{R} \to \mathbb{R}$ be a 1-Lipschitz function; let $\phi \colon J \to \mathbb{R}$. Then,
$$
\vv(f\circ \phi, J) \leq \vv(\phi,J), \qquad \ww(f\circ \phi, J) \leq \ww(\phi,J).
$$
\end{lemma}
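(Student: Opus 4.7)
The plan is to exploit the interplay between the Lipschitz property of $f$ and the fact that $Q$ is even and convex (hence non-decreasing on $[0,+\infty)$). The first step is a pointwise bound: for any constant $c\in\mathbb{R}$ and any $s\in J$, the 1-Lipschitz hypothesis gives $|f(\phi(s))-f(c)|\le |\phi(s)-c|$, so by monotonicity of $Q$ on $[0,+\infty)$ together with evenness we obtain
$$
\qq\bigl(f(\phi(s))-f(c)\bigr)=\qq\bigl(|f(\phi(s))-f(c)|\bigr)\le \qq\bigl(|\phi(s)-c|\bigr)=\qq(\phi(s)-c).
$$

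Averaging this inequality over $J$ yields $\vv_{f(c)}(f\circ\phi,J)\le \vv_c(\phi,J)$ for every $c\in\mathbb{R}$. Since $f(c)$ is just one possible candidate constant for the functional $\vv$ applied to $f\circ\phi$, taking the infimum over all admissible shifts gives
$$
\vv(f\circ\phi,J)\le \vv_{f(c)}(f\circ\phi,J)\le \vv_c(\phi,J).
$$
Now taking the infimum over $c\in\mathbb{R}$ on the right-hand side produces the first desired inequality, $\vv(f\circ\phi,J)\le \vv(\phi,J)$.

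For the second inequality, the already-proved bound holds with $J$ replaced by any subinterval $L\subset J$. Taking the supremum over such $L$ and using the definition~\eqref{0.3} immediately gives
$$
\ww(f\circ\phi,J)=\sup_{L\subset J}\vv(f\circ\phi,L)\le \sup_{L\subset J}\vv(\phi,L)=\ww(\phi,J).
$$
There is essentially no obstacle here: the argument is a one-line application of the pointwise estimate and the definitions, and it does not rely on strict convexity of $Q$ or on any of the Bellman-type machinery from the preceding lemmas.
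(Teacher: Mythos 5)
Your proof is correct and follows essentially the same route as the paper: the pointwise Lipschitz bound combined with $Q$ even and increasing on $[0,\infty)$ gives $\vv_{f(c)}(f\circ\phi,J)\le\vv_c(\phi,J)$, from which both inequalities follow by taking infima and suprema. You spell out the monotonicity-of-$Q$ step and the infimum bookkeeping a bit more explicitly than the paper does, but the argument is the same.
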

\begin{proof}
For any $\cc \in \mathbb{R}$ and $s \in J$ we have $|f(\phi(s)) - f(\cc)| \leq |\phi(s)-\cc|$. Therefore,
$\qq(f(\phi(s)) - f(\cc))\leq \qq(\phi(s)-\cc)$. Integrate to obtain 
$$
\vv_{f(\cc)}(f\circ \phi,J) \leq \vv_{\cc}(\phi,J), 
$$
and hence
$$
\vv(f\circ \phi, J) \leq \vv(\phi,J).
$$
Therefore, the same relation holds for the functional $\ww$:
$$
\ww(f\circ \phi, J) \leq \ww(\phi,J).
$$
\end{proof}

Let $A,B \in \mathbb{R},$ $A \le B$. Define 
$$
\tr_{A,B}(s) = A\chi_{(-\infty,A)}(s) + s \chi_{[A,B]}(s) + B\chi_{(B,+\infty)}(s), \qquad s \in \mathbb{R}. 
$$
The function $\tr_{A,B}$ is 1-Lipschitz on $\mathbb{R},$ and for any $\phi \colon J \to \mathbb{R}$ the function $\tr_{A,B}\circ \phi$ is the two-sided truncation of $\phi$ at the levels $A$ and $B$:
$$
\tr_{A,B}\circ \phi = \min(B, \max (A, \phi)). 
$$
Lemma~\ref{lem3} implies that the truncation does not increase the ``$\ww$-characteristic'' of a function:
\eq[eq9]{
\ww(\tr_{A,B}\circ \phi,J) \leq \ww(\phi, J). 
}
We are now in a position to prove Theorem~\ref{th1} for the case of a strictly convex $Q.$
\begin{proof}[Proof of Theorem~\ref{th1}]
Take any $\eps>0$ such that $\ww(\phi, I) < \qq(\eps)$. The theorem will be proved if we show that $\vv(\phi^*, J)\leq \qq(\eps)$ for any $J\subset I.$ It is sufficient to take $J=[t_1,t_2]$ with $0<t_1<t_2<1;$ the cases $t_1=0$ and $t_2=1$ follow by taking a limit. 

Take $A = \phi^*(t_2),$ $B = \phi^*(t_1)$; thus, $A \leq B$. If $B-A \leq 2\eps,$ then for $C:= \frac{A+B}{2}$ we have 
$|\phi^*(s) - C|\leq \eps$ and $\qq(\phi^*(s) - C)\leq \qq(\eps)$ for any $s \in J$. Therefore 
$$
\vv(\phi^*,J) \leq \vv_C(\phi^*,J)  = \frac{1}{|J|}\int_J \qq(\phi^*(s) - C) ds \leq \qq(\eps),
$$ proving the claim in this case.

Assume now that $B\geq A +2\eps$. Let $\psi = \tr_{A,B}\circ \phi$. Then, by~\eqref{eq9} we have $\ww(\psi,I)\leq \ww(\phi,I) < \qq(\eps)$. Since 
$\psi^* = \tr_{A,B}\circ \phi^*$ coincides with $\phi^*$ on $J,$ it is enough to prove that $\vv(\psi^*, J)\leq \qq(\eps)$. 

We note that $\cc(\psi, I)= \cc(\psi^*, I) \in [A,B]$. Write $\cc=\cc(\psi, I).$ If $\cc\in [A+\eps, B-\eps],$ then
\begin{align*}
\qq(\eps) >& \vv_\cc(\psi, I) = \vv_\cc(\psi^*, I) = \int_I \qq(\psi^* - \cc) = \\
&t_1 \qq(B-\cc) + \int_J \qq(\psi^* - \cc) + (1-t_2) \qq(\cc-A)\geq \\
&(1-|J|)\qq(\eps) + \int_J \qq(\psi^* - \cc),
\end{align*}
and therefore 
$$
\qq(\eps) > \frac{1}{|J|}\int_J \qq(\psi^* - \cc) = \vv_\cc(\psi^*,J) \geq \vv(\psi^*,J).
$$

If $\cc \in [A, A+\eps),$ we apply the first part of Corollary~\ref{cor1} to $\psi$. We obtain  
\begin{align*}
\qq(\eps) \geq &\vv_{A+\eps}(\psi,I) = \vv_{A+\eps}(\psi^*,I) = \int_I \qq(\psi^* - A - \eps) = \\
&t_1 \qq(B-A-\eps) +  \int_J \qq(\psi^* - A-\eps) + (1-t_2) \qq(-\eps)\geq\\
&(1-|J|) \qq(\eps) + \int_J \qq(\psi^* - A-\eps),
\end{align*}
and therefore 
$$
\qq(\eps) \geq \frac{1}{|J|}\int_J \qq(\psi^* - A - \eps) = \vv_{A+\eps}(\psi^*,J) \geq \vv(\psi^*,J).
$$
Finally, is $\cc\in(B-\eps,B]$ we repeat the above argument, except using the second part of Corollary~\ref{cor1}.
\end{proof}
\section{Proof for the general case}
\label{general}

For $n \in \mathbb{N},$ let $\qq_n(s) = \qq(s) + \frac{1}{n} s^2,$ $s\in\mathbb{R}$. Then each $\qq_n$ is a strictly convex even function on $\mathbb{R},$ and the sequence $\{\qq_n\}$ converges to $\qq$ uniformly on compact sets. We will use the upper index $n$ for $\vv, \vv_c,$ and $\ww$ defined by~\eqref{0.1}, \eqref{0.2}, and \eqref{0.3}, respectively, but with $Q_n$ instead of $Q$.

First, we will prove the statement of Theorem~\ref{th1} for a bounded function $\phi$ on $I$. Thus assume  $|\phi(s)|<M$ for some $M>0$ and all $s \in I$. If $c\in \mathbb{R}$ and $|c|>2M,$ then $|\phi(s)-c|>M$ for all $s \in I,$ therefore for any subinterval $J\subset I$ we have 
$$\vv_c(\phi, J) = \av{\qq(\phi-c)}J \geq \qq(M) \geq \av{\qq(\phi)}J = \vv_0(\phi, J).$$
Hence,
$$
\vv(\phi, J) = \inf_{c \in \mathbb{R}}\vv_c(\phi, J) = \inf_{|c|\leq 2M }\vv_c(\phi, J). 
$$
The same inequality is true for each $n$:
$$
\vv^n(\phi, J)  =\inf_{|c|\leq 2M }\vv^n_c(\phi, J).  
$$
Since $Q_n$ converges to $Q$ uniformly on $[-3M,3M],$ for any $J\subset I$ we have
$$
\vv_c^n(\phi, J)  \to \vv_c(\phi,J),  
$$
uniformly in $c\in[-2M,2M].$
Therefore, 
$$
\vv^n(\phi, J)  \to \vv(\phi, J).
$$
Thus,
$$
\ww^n(\phi,I) \to \ww(\phi,I).
$$
Of course, the same relation holds for $\phi^*,$ that is, 
$
\ww^n(\phi^*,I) \to \ww(\phi^*,I).
$

Now, we use Theorem~\ref{th1} for $Q_n$ and $\phi$ to get
$$
\ww^n(\phi^*,I) \leq \ww^n(\phi,I).
$$
Letting $n\to \infty,$ we prove~\eqref{eqmain}. 

Now, let us drop the assumption that $\phi$ is bounded. Take a subinterval $J=[t_1,t_2] \subset I$ with $0<t_1<t_2<1$. Let 
$A= \phi^*(t_2),$ $B = \phi^*(t_1)$. Apply Theorem~\ref{th1} for $\psi = \tr_{A,B}\circ \phi$ to get
$$
\ww(\psi^*,I) \leq \ww(\psi,I).
$$
Since the restrictions of $\psi^*$ and $\phi^*$ to $J$ coincide, 
we can write
$$
\vv(\phi^*,J) = \vv(\psi^*,J) \leq \ww(\psi^*,I) \leq \ww(\psi,I) \leq \ww(\phi,I), 
$$ 
where the last inequality follows from~\eqref{eq9}. Taking the supremum over all subintervals $J$ yields the statement of the theorem.

\section*{Acknowledgments}
The authors are grateful to D.~Stolyarov for several helpful comments and suggestions.

\end{document}